\theoremstyle{plain} 
\newtheorem{theorem}{Theorem}
\newtheorem{corollary}[theorem]{Corollary}
\newtheorem{lemma}[theorem]{Lemma}
\newtheorem{proposition}[theorem]{Proposition}
\theoremstyle{definition} 
\theoremstyle{definition} 
\newtheorem*{ex*}{Example}
\theoremstyle{remark} 
\theoremstyle{remark} 
\newtheorem*{remark*}{Remark}
\newcommand{\dd}{\partial}
\renewcommand{\dd}{{\operatorname{d}}}
\newcommand{\supp}{\operatorname{supp}}
\newcommand{\card}{\operatorname{card}}
\renewcommand{\ex}{\operatorname{ex}}
\newcommand{\al}{\alpha}
\newcommand{\si}{\sigma}
\newcommand{\Si}{\Sigma}
\newcommand{\la}{\lambda}
\renewcommand{\la}{a}
\newcommand{\de}{\delta}
\newcommand{\be}{\beta}
\newcommand{\De}{\Delta}
\newcommand{\La}{\Lambda}
\newcommand{\ii}[1]{\,\mathbf{I}\{#1\}}
\newcommand{\intr}[2]{\overline{#1,#2}}
\newcommand{\f}{\mathbf{f}}
\newcommand{\Z}{\mathbb{Z}}
\newcommand{\R}{\mathbb{R}}
\newcommand{\B}{\mathcal{B}}
\newcommand{\C}{\mathfrak{C}}
\renewcommand{\C}{\Gamma}
\newcommand{\F}{\mathcal{F}}
\newcommand{\vp}{\varepsilon}
\newcommand{\tmu}{{\tilde{\mu}}}
\newcommand{\tnu}{{\tilde{\nu}}}
\newcommand{\msi}{{\mathrm{M}}}
\newcommand{\K}{{\mathrm{K}}}
\newcommand{\Dekf}{{\De_\f^{(k)}}}
\newcommand{\Dek}{{\De^{(k)}}}
\newcommand{\N}{{\mathbb{N}}}
\renewcommand{\le}{\leqslant}
\renewcommand{\ge}{\geqslant}
\begin{document}

\noindent \jobname.tex;  
\today

\begin{frontmatter}

\title{On the extreme points of moments sets}
\runtitle{Extreme points of moments sets}

%

\begin{aug}
\author{\fnms{Iosif} \snm{Pinelis}\thanksref{t2}\ead[label=e1]{ipinelis@mtu.edu}
}
  \thankstext{t2}{Supported by NSA grant H98230-12-1-0237}
\runauthor{Iosif Pinelis}


\address{Department of Mathematical Sciences\\
Michigan Technological University\\
Houghton, Michigan 49931, USA\\
E-mail: \printead[ipinelis@mtu.edu]{e1}}
\end{aug}

\begin{abstract} 
Necessary and sufficient conditions for a measure to be an extreme point of the set of measures (on an abstract measurable space) with prescribed generalized moments 
are given, as well as an application to extremal problems over such moment sets;  
{these conditions are expressed in terms of atomic partitions of the measurable space. It is also shown that every such extreme measure can be adequately represented by 
a linear combination of $k$ Dirac probability measures with nonnegative coefficients, where $k$ is the number of restrictions on moments; moreover, when the measurable space has appropriate topological properties, the phrase ``can be adequately represented by'' here can be replaced simply by ``is''.}
The proofs are elementary and mainly self-contained. 
\end{abstract}

  
%

\begin{keyword}[class=AMS]
\kwd[Primary ]{49K30} 
\kwd{49K27}
\kwd{52A40} 
\kwd[; secondary ]{26D15} 
\kwd{46A55} 
\kwd{46N10} 
\kwd{46N30} 
\kwd{60B05} 
\kwd{90C05} 
\kwd{90C25} 
\kwd{90C26} 
\kwd{90C48} 
\end{keyword}

\begin{keyword}
\kwd{moment sets}
\kwd{measures}
\kwd{generalized moments}
\kwd{extreme points}
\kwd{optimization}
\kwd{extremal problems}
\kwd{atoms}
\kwd{convex sets}
\end{keyword}

\end{frontmatter}

\settocdepth{chapter}


\settocdepth{subsubsection}

\theoremstyle{plain} 


Let $S$ be an arbitrary set and let $\Si$ be a $\si$-algebra over $S$, so that $(S,\Si)$ is a measurable space. 
Let $\msi$ denote the set of all (nonnegative) real-valued measures on $\Si$. 
For any $\mu\in\msi$ and any $A\in\Si$, define the truncation $\mu_A$ of the measure $\mu$ by $A$ via the formula 
\begin{equation*}
	\mu_A(B):=\mu(A\cap B) 
\end{equation*}
for all $B\in\Si$; it is then clear that $\mu_A\in\msi$. 

For any $\mu\in\msi$ and any $A\in\Si$, let us say that $A$ is a \emph{$\mu$-atom} if $\mu_A(B)\in\{0,\mu(A)\}$ for all $B\in\Si$. 

For any $n\in\N$ and $\mu\in\msi$, let us refer to an $n$-tuple $(A_1,\dots,A_n)$ of members of $\Si$ as a \emph{non-null $(n,\mu)$-partition of $S$} if the sets $A_1,\dots,A_n$ are pairwise disjoint, $A_1\cup\dots\cup A_n=S$, and $\mu(A_i)>0$ for all $i\in\intr1n$; 
let us say that such a partition $(A_1,\dots,A_n)$ is \emph{atomic} if $A_i$ is a $\mu$-atom for each $i\in\intr1n$; 
here and in what follows, for any $m$ and $n$ in $\Z\cup\{\infty\}$ we let $\intr mn:=\{j\in\Z\colon m\le j\le n\}$. 

Let $\F$ stand for the set of all $\Si$-measurable real-valued functions on $S$. 

Take any $k\in\intr1\infty$. For each $j\in\intr1k$, take any $f_j\in\F$, and let 
\begin{equation*}
	\f:=(f_1,\dots,f_k). 
\end{equation*}
For any $\mu\in\msi$, let us write $\f\in L^1(\mu)$ if $f_j\in L^1(S,\Si,\mu)$ for all $j\in\intr1k$, and also let 
\begin{equation*}
\int_A \f\dd\mu:=\Big(\int_A f_1\dd\mu,\dots,\int_A f_k\dd\mu\Big) 	
\end{equation*}
for all $\f\in\ L^1(\mu)$ and $A\in\Si$. 

For any $C\subseteq\R^k$, consider the moment set  
\begin{equation*}
	\msi_{\f,C}:=\Big\{\mu\in\msi\colon \f\in L^1(\mu), \int_S \f\dd\mu\in C\Big\}.  
\end{equation*}
In the case when $C$ is a singleton set of the form $\{c\}$ for some $c\in\R^k$, let us write $\msi_{\f,c}$ 
in place of $\msi_{\f,C}$. 

Let $\La$ denote a subset of $\msi$, and then consider the corresponding narrower moment sets  
\begin{equation}\label{eq:La}
\La_{\f,C}:=\La\cap\msi_{\f,C}\quad\text{and}\quad\La_{\f,c}:=\La\cap\msi_{\f,c},  
\end{equation}
again for any $C\subseteq\R^k$ and $c\in\R^k$. 

Consider also 
\begin{equation*}
	\Pi:=\{\mu\in\La\colon\mu(S)=1\}, 
\end{equation*}
the set of all probability measures in $\La$. 

For any (not necessarily convex) subset $\K$ of $\msi$, let $\ex\K$ denote the set of all extreme points of $\K$. Thus, $\mu\in\ex\K$ if and only if $\mu\in\K$ and for any $(t,\nu_0,\nu_1)\in(0,1)\times\K\times\K$ such that 
$(1-t)\nu_0+t\nu_1=\mu$ one has $\nu_0=\nu_1$. 

The following theorem presents a necessary condition for a measure to be an extreme point of the set $\La_{\f,C}$. 
 
\begin{theorem}\label{th:necess} \emph{\bf (Necessity).} 
Take any $C\subseteq\R^k$. 
Suppose that the following condition holds: 
\begin{equation}\label{eq:cond}
	\text{$\La$ is a convex cone such that $\mu_A\in\La$ for all $\mu\in\La$ and $A\in\Si$.}  
\end{equation}
Take any $\mu\in\ex\La_{\f,C}\setminus\{0\}$. Then 
for some $m\in\intr1k$ 
there is an atomic non-null $(m,\mu)$-partition $(A_1,\dots,A_m)$ of $S$ such that the vectors $\int_{A_1} \f\dd\mu,\dots,\int_{A_m} \f\dd\mu$ are linearly independent. 
\end{theorem}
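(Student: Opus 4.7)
The plan is a standard extreme-point perturbation argument built around measures of the form $\nu_t:=\sum_{i=1}^n(1+tc_i)\mu_{A_i}$, where $(A_1,\dots,A_n)$ is a non-null $(n,\mu)$-partition of $S$ and $t\in\R$ is small. The key structural claim to prove first is the following: for \emph{every} non-null $(n,\mu)$-partition $(A_1,\dots,A_n)$ of $S$, the vectors $\int_{A_1}\f\dd\mu,\dots,\int_{A_n}\f\dd\mu$ are linearly independent in $\R^k$; in particular $n\le k$. The desired atomic partition will then be extracted by pushing this fact to a partition with the maximum possible number of parts.

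To prove the linear independence, I would argue by contradiction. Assume real numbers $c_1,\dots,c_n$, not all zero, satisfy $\sum_i c_i\int_{A_i}\f\dd\mu=0$. Choose $s>0$ small enough that $1\pm sc_i\ge0$ for every $i$, and define $\nu_{\pm s}:=\sum_i(1\pm sc_i)\mu_{A_i}$. Since $\La$ is a convex cone and each $\mu_{A_i}\in\La$ by \eqref{eq:cond}, $\nu_{\pm s}\in\La$. Because $\f\in L^1(\mu)$ implies $\f\in L^1(\mu_{A_i})$ and $\sum_i\mu_{A_i}=\mu$, one readily checks $\int_S\f\dd\nu_{\pm s}=\int_S\f\dd\mu\pm s\sum_ic_i\int_{A_i}\f\dd\mu=\int_S\f\dd\mu\in C$, so $\nu_{\pm s}\in\La_{\f,C}$. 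Moreover $\mu=\tfrac12(\nu_{-s}+\nu_s)$ while $\nu_{-s}\ne\nu_s$ (testing on any $A_j$ with $c_j\ne0$ uses $\mu(A_j)>0$), which contradicts $\mu\in\ex\La_{\f,C}$.

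Since $\mu\ne 0$, the trivial tuple $(S)$ is a non-null $(1,\mu)$-partition, and the step above caps the number of parts of any non-null partition at $k$. Let $m\in\intr1k$ be the maximum such number and fix a non-null $(m,\mu)$-partition $(A_1,\dots,A_m)$; the vectors $\int_{A_i}\f\dd\mu$ are then linearly independent by the first step. Finally, I would verify that each $A_i$ is a $\mu$-atom: if not, some $B\in\Si$ gives $0<\mu(A_i\cap B)<\mu(A_i)$, and replacing $A_i$ by the pair $A_i\cap B,\,A_i\setminus B$ yields a non-null $(m+1,\mu)$-partition, violating the maximality of $m$. This produces the required atomic non-null $(m,\mu)$-partition.

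The only delicate step is the perturbation argument: making sure that $\nu_{\pm s}$ genuinely lies in $\La_{\f,C}$ (which is exactly why hypothesis \eqref{eq:cond} is imposed — the convex-cone property and truncation-closure together are precisely what let us conclude $\nu_{\pm s}\in\La$), and that the resulting decomposition of $\mu$ is nontrivial. The rest of the proof is elementary bookkeeping and a short maximality argument, with no topological or compactness input needed.
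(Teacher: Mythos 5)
Your proposal is correct and follows essentially the same route as the paper's own proof: the perturbation $\nu_{\pm}=\sum_i(1\pm\vp_i)\mu_{A_i}$ exploiting the cone and truncation-closure hypotheses, the bound $n\le k$ on the number of parts of any non-null partition, and the maximality argument yielding atomicity. The only cosmetic difference is that you state the linear independence of $\int_{A_i}\f\dd\mu$ for \emph{every} non-null partition up front, whereas the paper first caps $n$ at $k$ and then reuses the same perturbation for the maximal partition.
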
  


As usual, let us say that a measure $\mu\in\msi$ is a $0,\!1$ measure if all its values are in the set $\{0,1\}$; so, any $0,\!1$ measure in $\La$ is also in $\Pi$. 

In the case when $k=1$, $f_1=1$ on 
$S$, and $C=\{1\}\subset\R^1$, Theorem~\ref{th:necess} turns into 

\begin{corollary}\label{cor:necess,k=1} 
Suppose that condition \eqref{eq:cond} holds. 
Then any measure in 
$\ex\Pi$ is a $0,\!1$ measure. 
\end{corollary}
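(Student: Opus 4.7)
The plan is to derive the corollary directly from Theorem~\ref{th:necess} by specializing to the indicated data. First I would verify that the setup of the corollary matches that of the theorem: taking $k=1$, $f_1\equiv 1$ on $S$, and $C=\{1\}$, the moment set $\msi_{\f,C}$ becomes $\{\mu\in\msi\colon\mu(S)=1\}$, so $\La_{\f,C}=\La\cap\{\mu\in\msi\colon\mu(S)=1\}=\Pi$. Thus $\ex\La_{\f,C}=\ex\Pi$.

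Next, I would pick an arbitrary $\mu\in\ex\Pi$. Since $\mu(S)=1$, we have $\mu\ne 0$, so $\mu\in\ex\La_{\f,C}\setminus\{0\}$. The hypothesis \eqref{eq:cond} on $\La$ is assumed in the corollary, so Theorem~\ref{th:necess} applies and yields some $m\in\intr 1 1$ together with an atomic non-null $(m,\mu)$-partition $(A_1,\dots,A_m)$ of $S$ for which the vectors $\int_{A_i}\f\dd\mu$ are linearly independent. Since $\intr 1 1=\{1\}$, necessarily $m=1$, and the partition consists of the single set $A_1=S$, which is a $\mu$-atom.

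Finally, I would unwind the definition of $\mu$-atom: $A_1=S$ being a $\mu$-atom means $\mu_S(B)\in\{0,\mu(S)\}$ for every $B\in\Si$, i.e., $\mu(B)\in\{0,1\}$ for every $B\in\Si$. This is exactly the statement that $\mu$ is a $0,\!1$ measure, completing the proof.

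There is essentially no serious obstacle: the argument is pure bookkeeping, with the only thing to watch being that in the degenerate case $k=1$ the index range $\intr 1 k$ collapses to the single value $m=1$, forcing the atomic partition to be the trivial one $(S)$, which is precisely what gives the $0,\!1$ conclusion.
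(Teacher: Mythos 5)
Your proposal is correct and is exactly the specialization the paper intends: the paper gives no separate proof of Corollary~\ref{cor:necess,k=1}, simply asserting that Theorem~\ref{th:necess} ``turns into'' it when $k=1$, $f_1\equiv 1$, $C=\{1\}$, and your bookkeeping (identifying $\La_{\f,C}$ with $\Pi$, forcing $m=1$ and $A_1=S$, and unwinding the $\mu$-atom definition) fills in precisely the omitted routine details.
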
 

\begin{proof}[Proof of Theorem~\ref{th:necess}]
Suppose that for some $n\in\intr{k+1}\infty$ there is a non-null $(n,\mu)$-partition $(A_1,\dots,A_n)$ of $S$, so that $\mu(A_i)>0$ for all $i\in\intr1n$. Since $n>k$, the $n$ vectors $\int_{A_1} \f\dd\mu,\dots,\int_{A_n} \f\dd\mu$ in $\R^k$ are linearly dependent, so that $\vp_1\int_{A_1} \f\dd\mu+\dots+\vp_n\int_{A_n} \f\dd\mu=0$ for some nonzero vector $\vp=(\vp_1,\dots,\vp_n)\in(-1,1)^n$. For any such vector $\vp$, let 
$\nu_\pm:=(1\pm\vp_1)\mu_{A_1}+\dots+(1\pm\vp_n)\mu_{A_n}$. 
Then, by \eqref{eq:cond}, $\nu_\pm\in\La$. Moreover, $\int_S \f\dd\nu_\pm=
\sum_1^n(1\pm\vp_i)\int_{A_i}\f\dd\mu=\sum_1^n\int_{A_i}\f\dd\mu
=\int_S \f\dd\mu\in C$, so that $\nu_\pm\in\La_{\f,C}$. 
Also, $\frac12(\nu_++\nu_-)=\mu$ and $(\nu_+-\nu_-)(A_i)=2\vp_i\mu(A_i)\ne0$ for some $i\in\intr1n$. 
So, $\mu\notin\ex\La_{\f,C}$, which is a contradiction. 

Therefore, $N_\mu\subseteq\intr1k$, where $N_\mu$ denotes the set of all $n\in\N$ for which there is a non-null $(n,\mu)$-partition of $S$. On the other hand, the condition $\mu\ne0$ implies that 
$1\in N_\mu$, so that $N_\mu\ne\emptyset$. 
Thus, $m:=\max N_\mu\in\intr1k$. Since $m\in N_\mu$, there is a non-null $(m,\mu)$-partition $(A_1,\dots,A_m)$ of $S$, and the reasoning in the previous paragraph shows that the vectors $\int_{A_1} \f\dd\mu,\dots,\int_{A_m} \f\dd\mu$ are necessarily linearly independent.  
It remains to show that the non-null $(m,\mu)$-partition $(A_1,\dots,A_m)$ of $S$ is atomic. 
Indeed, suppose the contrary, so that, without loss of generality, $A_1$ is not a $\mu$-atom. Then for some $B\in\Si$ one has $B\subset A_1$ and $0<\mu(B)<\mu(A_1)$, and so, $(B,A_1\setminus B,A_2,\dots,A_m)$ is a non-null $(m+1,\mu)$-partition of $S$, which contradicts the definition $m:=\max N_\mu$. 
Now the proof of Theorem~\ref{th:necess} is complete. 
\end{proof}


Assuming the conditions of Theorem~\ref{th:necess} and using it, we shall show that every measure $\mu\in\ex\La_{\f,C}$ can be represented, in a certain sense sufficient for applications, by a linear combination of Dirac probability measures with nonnegative coefficients. 
Recall that the Dirac probability measure 
$\de_s$ with the mass at a point $s\in S$ is defined by the formula $\de_s(A)=\ii{s\in A}$ for all $A\in\Si$, where $\ii{\cdot}$ is the indicator function.  
Introduce indeed the following sets of (discrete) measures on $\Si$: 
\begin{align*}
	\Dekf&:=\Big\{\sum_1^m \la_i\de_{s_i}\colon 
	m\in\intr0k, 
	(\la_1,\dots,\la_m)\in(0,\infty)^m, 
	(s_1,\dots,s_m)\in S^m,  \\ 
&\qquad\qquad\qquad\qquad\qquad\quad	\f(s_1),\dots,\f(s_m)\text{ are linearly independent}\Big\},  \\ 
\intertext{and}
	\Dek&:=\Big\{\sum_1^k \la_i\de_{s_i}\colon 
	(\la_1,\dots,\la_k)\in[0,\infty)^k, 
	(s_1,\dots,s_k)\in S^k\Big\}. 
\end{align*}
In the above definition of $\Dekf$, in the case when $m=0$ it is assumed that $\sum_1^m \la_i\de_{s_i}$ is (the) zero (measure) and that the condition following $m\in\intr0k$ between the braces is trivially satisfied. In particular, 
$
	0\in\Dekf\subseteq\Dek\subseteq\msi.   
$ 


Note that for any $g\in\F$ and 
\begin{equation*}
	\text{for any $\nu=\sum_1^k \la_i\de_{s_i}\in\Dek$, one has 
	$\int_S g\dd\nu=\sum_1^k \la_i g(s_i)$.} 
\end{equation*}
To obtain Corollary~\ref{cor:dirac repr} below, we shall need
\begin{lemma}\label{lem:const} 
Take any $\mu\in\msi$. 
If $A$ is a $\mu$-atom 
and $g\in\F$, 
then there is a real number $\al$ such that $g=\al$ $\mu$-almost everywhere (a.e.) on $A$. 
\end{lemma}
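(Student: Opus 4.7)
My plan is to use the $\mu$-atom property of $A$ applied to the level sets of $g$. First I would dispose of the trivial case $\mu(A)=0$, where any real number $\al$ works; so assume $\mu(A)>0$. For each $t\in\R$, set $B_t:=\{s\in A\colon g(s)\le t\}$. Since $g$ is $\Si$-measurable and $A\in\Si$, we have $B_t\in\Si$, and the $\mu$-atom property yields $\mu(B_t)=\mu(A\cap B_t)\in\{0,\mu(A)\}$. Then define
\[
T:=\{t\in\R\colon \mu(B_t)=\mu(A)\}\quad\text{and}\quad\al:=\inf T.
\]

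Next I would verify that $\al$ is a finite real number. Since $g$ is real-valued, $B_t$ increases to $A$ as $t\to+\infty$ and decreases to $\emptyset$ as $t\to-\infty$. Because $\mu$ takes only finite real values (so $\mu(A)<\infty$, and $\mu(A)>0$ by assumption), continuity of $\mu$ along monotone sequences yields $\mu(B_t)\to\mu(A)$ as $t\to+\infty$ and $\mu(B_t)\to 0$ as $t\to-\infty$. The first limit shows $T$ is nonempty, so $\al<\infty$; the second shows that $\mu(B_t)<\mu(A)$ for all sufficiently small $t$, whence $t\notin T$ for all such $t$, so $\al>-\infty$.

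Finally I would show $g=\al$ $\mu$-a.e.\ on $A$. By monotonicity of $t\mapsto B_t$ together with the definition of $\al$: for every $t>\al$ there exists $t'\in T$ with $t'\le t$, so $\mu(B_t)\ge\mu(B_{t'})=\mu(A)$ and hence $\mu(B_t)=\mu(A)$; while for every $t<\al$ we have $t\notin T$, so $\mu(B_t)=0$. Countable subadditivity then gives
\[
\mu(\{s\in A\colon g(s)<\al\})=\mu\Big(\bigcup_{n\in\N}B_{\al-1/n}\Big)=0,
\]
and continuity of $\mu$ from above, using $\mu(A)<\infty$, yields
\[
\mu(\{s\in A\colon g(s)>\al\})=\mu(A)-\mu\Big(\bigcap_{n\in\N}B_{\al+1/n}\Big)=\mu(A)-\mu(A)=0.
\]
Combining these gives $\mu(\{s\in A\colon g(s)\ne\al\})=0$, as required. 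The argument is entirely routine; the only subtlety is that $\al$ must be defined as an infimum (it may itself fail to lie in $T$, e.g.\ if the relevant jump of $t\mapsto\mu(B_t)$ occurs at $\al$), so the two-sided approximation of $\{g=\al\}$ requires countable unions and intersections rather than a single set.
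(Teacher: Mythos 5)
Your proof is correct and follows essentially the same route as the paper's: both identify the a.e.\ value of $g$ as the cut point of the monotone $\{0,\mu(A)\}$-valued function $t\mapsto\mu(\{s\in A\colon g(s)\le t\})$ and then use monotone continuity of the (finite) measure on the two one-sided level sets. The only differences are cosmetic — you take $\inf\{t\colon\mu(B_t)=\mu(A)\}$ with non-strict level sets and dispose of the case $\mu(A)=0$ up front, whereas the paper takes $\sup\{\be\colon\mu(\{g<\be\})=0\}$ in $[-\infty,\infty]$ and deduces finiteness at the end.
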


\begin{proof}[Proof of Lemma~\ref{lem:const}]
For all $\be\in[-\infty,\infty]$, let $A_\be:=\{s\in A\colon g(s)<\be\}$. 
Then $\mu(A_\be)$ is nondecreasing in $\be\in\R$ and takes values in the set $\{0,\mu(A)\}$. 
Let $\be_*:=\sup\{\be\in\R\colon\mu(A_\be)=0\}\in[-\infty,\infty]$. 
Then $\mu(A_\be)=0$ for all $\be\in[-\infty,\be_*)$ and $\mu(A_\be)=\mu(A)$ for all $\be\in(\be_*,\infty]$. 
If $\be_*>-\infty$ then $\mu(A_{\be_*})=\lim_{\be\uparrow\be_*}\mu(A_\be)=0$; if $\be_*=-\infty$ then $A_{\be_*}=\emptyset$ and hence again $\mu(A_{\be_*})=0$. 
Thus, in all cases $\mu(A_{\be_*})=0$ or, equivalently, $g\ge\be_*$ $\mu$-a.e.\ on $A$.  
Similarly, if $\be_*<\infty$ then $\mu(A_{\be_*+})=\lim_{\be\downarrow\be_*}\mu(A_\be)=\mu(A)$, where $A_{\be+}:=\{s\in A\colon g(s)\le\be\}$; if $\be_*=\infty$ then $A_{\be_*+}=A$ and hence again $\mu(A_{\be_*})=\mu(A)$. 
Thus, in all cases $g\le\be_*$ $\mu$-a.e.\ on $A$.  
We conclude that $g=\be_*$ $\mu$-a.e.\ on $A$. 
In particular, it follows that, if $\mu(A)>0$, then necessarily $\be_*\in\R$, because $g$ is real-valued; and if $\mu(A)=0$ then $g=\al$ $\mu$-a.e.\ on $A$ for any given real number $\al$. 
\end{proof}

From now on, take any $C\subseteq\R^k$ and take $g$ to be any function in $\F$ such that the integral $\int_S g\dd\mu$ exists in $[-\infty,\infty]$ for each $\mu\in\La_{\f,C}$. 

\begin{corollary}\label{cor:dirac repr} 
Suppose that condition \eqref{eq:cond} holds. 
Take any 
$\mu\in\ex\La_{\f,C}$. 
Then 
\begin{enumerate}[(i)]
	\item there is a measure $\tmu$ such that 
\begin{equation}\label{eq:f repr}
	\tmu\in\Dekf\cap\msi_{\f,C},\quad \int_S \f\dd\tmu=\int_S \f\dd\mu,\quad\text{and}\quad 
	\int_S g\dd\tmu=\int_S g\dd\mu.  
\end{equation}
	\item
If, in addition, 
\begin{equation}\label{eq:dirac in}
	\Dekf\subseteq\La, 
\end{equation}
then it follows that $\tmu\in\Dekf\cap\La_{\f,C}$. 
\end{enumerate} 

In this corollary, one can replace $\Dekf$ by $\Dek$ throughout. 
\end{corollary}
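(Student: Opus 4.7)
The plan is to invoke Theorem~\ref{th:necess} to obtain an atomic non-null partition of $S$, apply Lemma~\ref{lem:const} on each atom to see that $\f$ and $g$ are $\mu$-a.e.\ constant there, and then collapse each atom to a single Dirac mass located at a representative point, weighted by the $\mu$-measure of the atom.

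First, the case $\mu=0$ is handled by setting $\tmu:=0\in\Dekf$ (the $m=0$ sum); the requirements in \eqref{eq:f repr} then reduce to $0\in C$, which holds because $\mu=0\in\msi_{\f,C}$. So assume $\mu\ne 0$. Theorem~\ref{th:necess} produces $m\in\intr1k$ and an atomic non-null $(m,\mu)$-partition $(A_1,\dots,A_m)$ of $S$ with $\int_{A_1}\f\dd\mu,\dots,\int_{A_m}\f\dd\mu$ linearly independent in $\R^k$. Applying Lemma~\ref{lem:const} to each of $f_1,\dots,f_k$ and to $g$ on each $A_i$ gives constants $\al_{i1},\dots,\al_{ik},\ga_i\in\R$ such that $f_j=\al_{ij}$ and $g=\ga_i$ $\mu$-a.e.\ on $A_i$. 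A short check shows $\mu(A_i)<\infty$: otherwise $\f\in L^1(\mu)$ would force $\al_{ij}=0$ for every $j$, giving $\int_{A_i}\f\dd\mu=0$ and breaking linear independence. Hence $\la_i:=\mu(A_i)\in(0,\infty)$.

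In each $A_i$ the set on which some $f_j$ or $g$ differs from its designated constant is a finite union of $\mu$-null sets, so its complement in $A_i$ has $\mu$-measure $\mu(A_i)>0$ and is therefore nonempty. Pick any $s_i$ in that complement, so that $\f(s_i)=(\al_{i1},\dots,\al_{ik})$ and $g(s_i)=\ga_i$, and set $\tmu:=\sum_1^m\la_i\de_{s_i}$. A one-line computation then yields $\int_S h\dd\tmu=\sum_i\la_i h(s_i)=\sum_i\int_{A_i}h\dd\mu=\int_S h\dd\mu$ for $h\in\{f_1,\dots,f_k,g\}$, which supplies all three equalities in \eqref{eq:f repr}. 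Since $\int_{A_i}\f\dd\mu=\la_i\f(s_i)$ with $\la_i>0$, the linear independence of the $\int_{A_i}\f\dd\mu$ transfers to $\f(s_1),\dots,\f(s_m)$, confirming $\tmu\in\Dekf$; this gives~(i). Part~(ii) is then immediate: under the hypothesis $\Dekf\subseteq\La$ one has $\tmu\in\La\cap\msi_{\f,C}=\La_{\f,C}$.

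For the final assertion that $\Dekf$ may be replaced by $\Dek$ throughout, use the trivial inclusion $\Dekf\subseteq\Dek$ obtained by padding a sum $\sum_1^m\la_i\de_{s_i}\in\Dekf$ with $k-m$ terms of zero weight (at arbitrary points). The main---essentially the only---obstacle in the whole argument is the finiteness check $\mu(A_i)<\infty$ above; everything else is routine bookkeeping, given Theorem~\ref{th:necess} and Lemma~\ref{lem:const}.
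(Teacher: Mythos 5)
Your proposal is correct and follows essentially the same route as the paper: invoke Theorem~\ref{th:necess}, use Lemma~\ref{lem:const} to get constancy of $f_1,\dots,f_k,g$ on a full-measure subset of each atom, pick a representative point there, and set $\tmu=\sum\mu(A_i)\de_{s_i}$. The only divergence is your finiteness check on $\mu(A_i)$, which is harmless but moot, since the paper defines $\msi$ as the set of real-valued (hence finite) measures.
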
 

\begin{proof}[Proof of Corollary~\ref{cor:dirac repr}]
If $\mu=0$ then, as discussed above, $\mu\in\Dekf$, 
so that \eqref{eq:f repr} will hold with $\tmu=\mu$. 
It remains to consider the case $\mu\in\ex\La_{\f,C}\setminus\{0\}$. 
Then, by Theorem~\ref{th:necess}, for some $m\in\intr1k$ 
there is an atomic non-null $(m,\mu)$-partition $(A_1,\dots,A_m)$ of $S$ such that the vectors $\int_{A_1} \f\dd\mu,\dots,\int_{A_m} \f\dd\mu$ are linearly independent. 

Let now $f_0:=g$. 
By Lemma~\ref{lem:const}, for each pair $(i,j)\in\intr1m\times\intr0k$ there exist a subset $B_{i,j}$ of $A_i$ and a real number $\la_{i,j}$ such that $B_{i,j}\in\Si$, $\mu(B_{i,j})=\mu(A_i)$, and $f_j(s)=\la_{i,j}$ for all $s\in B_{i,j}$. 

Fix, in this paragraph, any $i\in\intr1m$, and let $B_i:=\cap_{j=0}^k B_{i,j}$. Then $B_i\subseteq A_i$, $\mu(B_i)=\mu(A_i)>0$ and $f_j(s)=\la_{i,j}$ for all $s\in B_i$ and $j\in\intr0k$. Since $\mu(B_i)>0$, there is some $s_i\in B_i$. For any such $s_i$ and each $j\in\intr0k$, 
\begin{equation*}
\int_{A_i}f_j\dd\mu=\la_{i,j}\mu(A_i)=f_j(s_i)\mu(A_i)=\int_{A_i}f_j\dd\tmu,    	
\end{equation*}
where $\tmu:=\mu(A_1)\de_{s_1}+\dots+\mu(A_m)\de_{s_m}$.
Now \eqref{eq:f repr} immediately follows, and then  
part (ii) of Corollary~\ref{cor:dirac repr} immediately follows by \eqref{eq:La}. 
%

The last sentence of the corollary now follows immediately as well, because $\Dekf\subseteq\Dek$. 
\end{proof}


\begin{corollary}\label{cor:dirac opt} 
Suppose that condition \eqref{eq:cond} holds and 
\begin{equation}\label{eq:=ex}
	\sup\Big\{\int_S g\dd\mu\colon{\mu\in\La_{\f,C}}\Big\} =
	\sup\Big\{\int_S g\dd\mu\colon{\mu\in\ex\La_{\f,C}}\Big\}.
\end{equation}
Then 
\begin{equation}\label{eq:dirac opt le}
	\sup\Big\{\int_S g\dd\mu\colon{\mu\in\La_{\f,C}}\Big\} \le 
	\sup\Big\{\int_S g\dd\mu\colon{\mu\in\Dekf\cap\msi_{\f,C}}\Big\}.  
\end{equation}  
If condition \eqref{eq:dirac in} holds as well, then  
\begin{equation}\label{eq:dirac opt eq}
	\sup\Big\{\int_S g\dd\mu\colon{\mu\in\La_{\f,C}}\Big\} = 
	\sup\Big\{\int_S g\dd\mu\colon{\mu\in\Dekf\cap\La_{\f,C}}\Big\}.  
\end{equation}  
%

In \eqref{eq:dirac opt le}, one can replace $\Dekf$ by $\Dek$; one can do so in \eqref{eq:dirac opt eq} too if such a replacement is done in condition \eqref{eq:dirac in} as well. 
\end{corollary}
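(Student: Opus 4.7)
\medskip
\noindent\textbf{Proof plan.} The plan is to derive Corollary~\ref{cor:dirac opt} as a direct packaging of Corollary~\ref{cor:dirac repr}, without introducing any new ideas: each of the inequalities in \eqref{eq:dirac opt le}--\eqref{eq:dirac opt eq} will be obtained by replacing an arbitrary extreme measure $\mu$ by its discrete representative $\tmu$ produced there, and matching $g$-integrals.

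First I would establish \eqref{eq:dirac opt le}. For any $\mu\in\ex\La_{\f,C}$, Corollary~\ref{cor:dirac repr}(i) yields some $\tmu\in\Dekf\cap\msi_{\f,C}$ with $\int_S g\dd\tmu=\int_S g\dd\mu$. Hence $\int_S g\dd\mu\le\sup\{\int_S g\dd\nu\colon\nu\in\Dekf\cap\msi_{\f,C}\}$. Taking the supremum over $\mu\in\ex\La_{\f,C}$ on the left and invoking the hypothesis \eqref{eq:=ex} to rewrite that supremum as $\sup\{\int_S g\dd\mu\colon\mu\in\La_{\f,C}\}$ delivers \eqref{eq:dirac opt le}.

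To get \eqref{eq:dirac opt eq} under the additional assumption \eqref{eq:dirac in}, I would rerun the same argument using Corollary~\ref{cor:dirac repr}(ii) in place of part (i), which upgrades $\tmu$ to a member of $\Dekf\cap\La_{\f,C}$; this gives the ``$\le$'' direction of \eqref{eq:dirac opt eq}. The reverse inequality is immediate from the inclusion $\Dekf\cap\La_{\f,C}\subseteq\La_{\f,C}$, yielding the claimed equality.

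Finally, the replacement of $\Dekf$ by $\Dek$ is essentially free: the inclusion $\Dekf\subseteq\Dek$ only enlarges the right-hand sets in \eqref{eq:dirac opt le}--\eqref{eq:dirac opt eq}, so \eqref{eq:dirac opt le} automatically persists with $\Dek$ in place of $\Dekf$; and if \eqref{eq:dirac in} is correspondingly strengthened to $\Dek\subseteq\La$, then the measure $\tmu$ built above still lies in $\Dek\cap\La_{\f,C}$, so the same two-sided comparison produces \eqref{eq:dirac opt eq} with $\Dek$ in place of $\Dekf$. I do not foresee any genuine obstacle, as the whole argument is bookkeeping built on top of Corollary~\ref{cor:dirac repr}; the only minor check is that $\int_S g\dd\tmu$ is well defined, which is automatic because $\tmu$ is a finite nonnegative combination of point masses and hence the integral is a finite sum of real values of $g$.
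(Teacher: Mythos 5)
Your proof is correct and follows essentially the same route as the paper: both arguments pass from each extreme measure to its discrete representative via Corollary~\ref{cor:dirac repr}, invoke \eqref{eq:=ex} to transfer the supremum, and handle \eqref{eq:dirac opt eq} and the $\Dek$ variant by the obvious inclusions. The only (immaterial) difference is that the paper obtains the ``$\le$'' direction of \eqref{eq:dirac opt eq} from \eqref{eq:dirac opt le} together with the set identity $\Dekf\cap\msi_{\f,C}=\Dekf\cap\La_{\f,C}$ under \eqref{eq:dirac in}, whereas you reapply part (ii) of Corollary~\ref{cor:dirac repr} directly.
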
 

\begin{proof}[Proof of Corollary~\ref{cor:dirac opt}] 
By part (i) of Corollary~\ref{cor:dirac repr}, 
the right-hand side of \eqref{eq:=ex} is no greater than the right-hand side of \eqref{eq:dirac opt le}. 
Hence, by \eqref{eq:=ex}, the left-hand side of \eqref{eq:dirac opt le} is no greater than its right-hand side. 

Now, if \eqref{eq:dirac in} holds, then 
$\Dekf\cap\msi_{\f,C}=\Dekf\cap\La_{\f,C}$, and hence the right-hand side of \eqref{eq:dirac opt le} 
equals the right-hand side of \eqref{eq:dirac opt eq}, which in turn is obviously no greater than the left-hand side of \eqref{eq:dirac opt eq}. 
So, \eqref{eq:dirac opt eq} follows. 

The last sentence of Corollary~\ref{cor:dirac opt} is proved quite similarly, using the last sentence of Corollary~\ref{cor:dirac repr}. 
\end{proof} 

Applications of equalities of the form \eqref{eq:dirac opt eq} can be found e.g.\ in \cite{pin-utev-exp}. 

Let us now indicate a number of generic cases when 
condition \eqref{eq:=ex} holds: 

\begin{proposition}\label{prop:=ex}
Condition \eqref{eq:=ex} is satisfied in each of the following cases:  
\begin{enumerate}[(i)]
	\item when there exists an extreme point of the set 
\begin{equation*}
	\La_{\max g;\;\f,C}:=\Big\{\nu\in\La_{\f,C}\colon\int_S g\dd\nu\ge\int_S g\dd\mu\text{ for all }\mu\in\La_{\f,C}\Big\}; 
\end{equation*}
	\item when $\La_{\max g;\;\f,C}$ is a nonempty compact convex subset of a locally convex space; 
	\item when $\La_{\max g;\;\f,C}$ is a nonempty compact finite-dimensional set; 
	\item when $\La_{\max g;\;\f,C}$ is a singleton set \big(that is, when the maximum of $\int_S g\dd\mu$ over all $\mu\in\La_{\f,C}$ is attained at a unique measure $\mu\in\La_{\f,C}$\big); 
	\item when the set $S$ is endowed with the structure of a Hausdorff topological space, the $\si$-algebra $\Si$ coincides with the corresponding Borel $\si$-algebra $\B$, $\La=\msi$, and $C=\{1\}\times I_2\times\dots\times I_k$, where $I_2,\dots,I_k$ are arbitrary closed convex subsets of $\R$ \big(so that all measures in $\La_{\f,C}$ are probability measures\big).  
\end{enumerate}
\end{proposition}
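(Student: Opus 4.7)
The plan is to prove (i) directly and then derive (ii)--(iv) as immediate corollaries by exhibiting an extreme point of $\La_{\max g;\;\f,C}$; case (v), which lacks any obvious compactness, is the real work.

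For case (i), I would take $\mu^* \in \ex \La_{\max g;\;\f, C}$ and show $\mu^* \in \ex \La_{\f, C}$, which together with $\mu^* \in \La_{\max g;\;\f, C}$ gives \eqref{eq:=ex}. Given any decomposition $\mu^* = (1-t)\nu_0 + t\nu_1$ with $t \in (0,1)$ and $\nu_0, \nu_1 \in \La_{\f, C}$, linearity yields
\[
\int_S g\,\dd\mu^* = (1-t)\int_S g\,\dd\nu_0 + t\int_S g\,\dd\nu_1,
\]
and since $\int_S g\,\dd\nu_i \le \int_S g\,\dd\mu^*$ by maximality of $\mu^*$, both inequalities are equalities; thus $\nu_0, \nu_1 \in \La_{\max g;\;\f, C}$ and extremality of $\mu^*$ in that set forces $\nu_0 = \nu_1$. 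For (ii)--(iv), each hypothesis supplies such an extreme point of $\La_{\max g;\;\f, C}$: (ii) by Krein--Milman in a locally convex space; (iii) by the classical fact that nonempty compact convex subsets of finite-dimensional affine spaces have extreme points, with convexity of $\La_{\max g;\;\f, C}$ coming from the convex-cone assumption on $\La$ in \eqref{eq:cond}; (iv) trivially.

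For (v), where $\La_{\f, C}$ is not a priori compact, the plan is a direct Carath\'eodory-style construction: for any $\mu \in \La_{\f, C}$, build $\tmu \in \ex \La_{\f, C}$ with $\int_S g\,\dd\tmu \ge \int_S g\,\dd\mu$. Applying Carath\'eodory in $\R^{k+1}$ to the convex hull of $\{(\f(s), g(s)) : s \in S\}$, and using that all measures in $\La_{\f, C}$ are probability measures (so the first coordinate of the moment is fixed at $1$), I would match the $(\f, g)$-moment of $\mu$ by a discrete probability measure $\nu$ with at most $k+1$ atoms. Linear dependence among the $k+1$ vectors $\f(s_1), \dots, \f(s_{k+1}) \in \R^k$ then supplies a one-parameter perturbation of the masses that preserves the $\f$-moment; orienting it so that $\int_S g\,\dd\nu$ does not decrease and pushing to the boundary of mass-positivity drops an atom, producing $\tmu \in \Dek \cap \La_{\f, C}$ with $\int_S g\,\dd\tmu \ge \int_S g\,\dd\mu$.

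The main obstacle will be verifying $\tmu \in \ex \La_{\f, C}$, since the product constraint $C = \{1\} \times I_2 \times \cdots \times I_k$ is not a singleton and so a discrete representation alone does not force extremality. Any decomposition $\tmu = (1-t)\nu_0 + t\nu_1$ with $\nu_0, \nu_1 \in \La_{\f, C}$ forces $\nu_0, \nu_1$ to be supported on the finitely many atoms of $\tmu$, reducing to finitely many nonnegative parameters constrained by $\int_S \f\,\dd\nu_i \in C$. The closedness and product structure of $C$, together with the $g$-improving nature of the construction of $\tmu$, should rule out any nontrivial such decomposition: otherwise one could iterate the perturbation to strictly raise $\int_S g\,\dd\tmu$ while remaining in $\La_{\f, C}$. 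Making this rigorous—by carefully choosing the perturbation direction within the tangent cone of $C$ at $\int_S \f\,\dd\tmu$ and verifying termination of the reduction—is the delicate point where the specific topological hypotheses of (v) actually enter.
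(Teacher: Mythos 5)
Your treatment of cases (i)--(iv) is correct and coincides with the paper's proof: an extreme point of the face $\La_{\max g;\;\f,C}$ is an extreme point of $\La_{\f,C}$ (your averaging argument is exactly the ``easy to see'' step the paper leaves implicit), and (ii)--(iv) reduce to (i) via Krein--Milman and its finite-dimensional version. For case (v), however, the paper does not give a self-contained argument at all: it invokes Theorems~3.1, 3.2 and Proposition~3.1 of Winkler's paper (which rest on von Weizs\"acker--Winkler). So your attempt at a direct proof of (v) is a genuinely different route, and it is precisely there that your argument has a real gap.

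The Carath\'eodory reduction does produce $\tmu\in\Dekf\cap\La_{\f,C}$ with $\int_S\f\,\dd\tmu=\int_S\f\,\dd\mu$ and $\int_S g\,\dd\tmu\ge\int_S g\,\dd\mu$, but membership in $\Dekf\cap\La_{\f,C}$ does not put $\tmu$ in $\ex\La_{\f,C}$ when $C$ is not a singleton: Theorem~\ref{th:suff} only yields extremality in the fiber $\La_{\f,c}$ with $c=\int_S\f\,\dd\tmu$, which is a proper subset of $\La_{\f,C}$, and decompositions of $\tmu$ are free to move the moment vector inside $C$. Your proposed repair --- ``otherwise one could iterate the perturbation to strictly raise $\int_S g\,\dd\tmu$'' --- is false. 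Take $k=2$, $f_1=1$, $f_2(s)=s$ on $S=[0,1]$, $I_2=[0,1]$ (so $\La_{\f,C}$ is all Borel probability measures on $[0,1]$) and $g\equiv 0$. Then $\tmu=\tfrac12\de_0+\tfrac12\de_1$ lies in $\Dekf\cap\La_{\f,C}$ with $\f(0),\f(1)$ linearly independent, it is not an extreme point of $\La_{\f,C}$ (it is a nontrivial convex combination of $\de_0$ and $\de_1$, both in $\La_{\f,C}$), and no perturbation changes $\int_S g\,\dd\tmu=0$ at all, let alone strictly increases it. So non-extremality does not imply strict improvability, and your iteration has no reason to terminate at a point of $\ex\La_{\f,C}$. (In this toy example the conclusion \eqref{eq:=ex} is of course still true --- one should pass from $\tmu$ to a single Dirac mass --- but that further reduction is exactly the content of the Winkler/von Weizs\"acker results: one must characterize $\ex\La_{\f,C}$ for the product-form $C$, not merely $\ex\La_{\f,c}$ fiberwise, and then show every element of $\La_{\f,C}$ is dominated in $g$-integral by such an extreme point.) Two smaller gaps in the same step: you need to justify that the barycenter $\int_S(\f,g)\,\dd\mu$ lies in the convex hull of the \emph{range} $\{(\f(s),g(s))\colon s\in S\}$ (the standard separation-plus-induction argument works, but it is not free), and you must handle the case $\int_S g\,\dd\mu=\pm\infty$, which is permitted by the paper's standing assumption on $g$.
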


\begin{proof}[Proof of Proposition~\ref{prop:=ex}] \ 

\noindent (i):\quad Suppose that condition (i) of Proposition~\ref{prop:=ex} holds, so that there exists an extreme point of the set $\La_{\max g;\;\f,C}$. Then it is easy to see that any such point (say $\mu_{\max}$) is in $\ex\La_{\f,C}$. At that, $\int_S g\dd\mu_{\max}$ equals the left-hand side of \eqref{eq:=ex}. Thus, \eqref{eq:=ex} follows. 

\noindent (ii,\;iii,\;iv):\quad Suppose that condition (ii) of Proposition~\ref{prop:=ex} holds. 
Then, by the Krein--Milman theorem (see e.g.\ \cite{phelps}), condition (i) of the proposition holds as well. Thus, \eqref{eq:=ex} follows. 
Note also that condition (iv) of Proposition~\ref{prop:=ex} implies condition (iii), which in turn implies (ii). 

\noindent (v):\quad Suppose that condition (v) of Proposition~\ref{prop:=ex} holds. Then, by the arguments in  \cite[Theorems~3.1 and 3.2 and Proposition~3.1]{winkler88} (which in turn rely mainly on \cite{weiz-wink}), \eqref{eq:=ex} follows. 
\end{proof}

Let us now supplement the results presented above by a few other ones, which are perhaps of lesser interest, in that they are not needed for applications such as Corollary~\ref{cor:dirac opt}. 

The following theorem supplements Theorem~\ref{th:necess}, as it presents a sufficient condition for a measure to be extreme in the moment set. Note that condition~\eqref{eq:cond} is not needed in Theorem~\ref{th:suff}; on the other hand, here $C$ is taken to be a singleton set. 
 
\begin{theorem}\label{th:suff} \emph{\bf (Sufficiency).} 
Take any $c\in\R^k$. 
Take any $\mu\in\La_{\f,c}
$ such that for some $m\in\intr1k$ 
there is an atomic non-null $(m,\mu)$-partition $(A_1,\dots,A_m)$ of $S$ and at that the vectors $\int_{A_1} \f\dd\mu,\dots,\int_{A_m} \f\dd\mu$ are linearly independent. 
Then $\mu\in\ex\La_{\f,c}$. 
\end{theorem}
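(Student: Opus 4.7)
The plan is to fix a representation $\mu = (1-t)\nu_0 + t\nu_1$ with $t\in(0,1)$ and $\nu_0,\nu_1\in\La_{\f,c}$, and show directly that $\nu_0=\nu_1$. The first observation I would record is that since $\mu-(1-t)\nu_0=t\nu_1\ge0$ and $\mu-t\nu_1=(1-t)\nu_0\ge0$ as measures, every $\mu$-null set in $\Si$ is both $\nu_0$-null and $\nu_1$-null. This absolute-continuity statement will transfer any $\mu$-a.e.\ equality to a $\nu_0$- and $\nu_1$-a.e.\ equality.

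Next I would localize to each atom. Fix $i\in\intr1m$; since $A_i$ is a $\mu$-atom, Lemma~\ref{lem:const} produces real numbers $\la_{i,1},\dots,\la_{i,k}$ with $f_j=\la_{i,j}$ $\mu$-a.e.\ on $A_i$ for every $j\in\intr1k$. By the previous paragraph, the same equalities hold $\nu_0$- and $\nu_1$-a.e.\ on $A_i$, giving
\begin{equation*}
\int_{A_i}f_j\dd\nu_0=\la_{i,j}\,\nu_0(A_i),\quad
\int_{A_i}f_j\dd\nu_1=\la_{i,j}\,\nu_1(A_i),\quad
\int_{A_i}f_j\dd\mu=\la_{i,j}\,\mu(A_i).
\end{equation*}
Since $(A_1,\dots,A_m)$ partitions $S$ and both $\nu_0$ and $\mu$ are zero off $\bigcup_iA_i$ (as $\mu(S\setminus\bigcup_iA_i)=0$), summing in $i$ and using $\int_S\f\dd\nu_0=c=\int_S\f\dd\mu$ yields $\sum_{i=1}^m(\nu_0(A_i)-\mu(A_i))\la_{i,j}=0$ for every $j$. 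In vector form this is $\sum_i(\nu_0(A_i)-\mu(A_i))\,\vec\la_i=0$ in $\R^k$, where $\vec\la_i:=(\la_{i,1},\dots,\la_{i,k})=\int_{A_i}\f\dd\mu/\mu(A_i)$. Linear independence of the vectors $\int_{A_i}\f\dd\mu$ (together with $\mu(A_i)>0$) forces $\vec\la_1,\dots,\vec\la_m$ to be linearly independent, whence $\nu_0(A_i)=\mu(A_i)$ for all $i$, and the identical argument gives $\nu_1(A_i)=\mu(A_i)$.

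Finally I would upgrade this equality of total masses on atoms to an equality of measures on all of $\Si$. For any $B\in\Si$ and any $i$, atomicity gives $\mu(B\cap A_i)\in\{0,\mu(A_i)\}$. If $\mu(B\cap A_i)=0$, then $\nu_0(B\cap A_i)=\nu_1(B\cap A_i)=0$ by the absolute-continuity remark above; if $\mu(B\cap A_i)=\mu(A_i)$, then $\mu(A_i\setminus B)=0$, so $\nu_0(A_i\setminus B)=\nu_1(A_i\setminus B)=0$, and therefore $\nu_0(B\cap A_i)=\nu_0(A_i)=\mu(A_i)=\nu_1(A_i)=\nu_1(B\cap A_i)$. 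Summing over $i$ and using that the $A_i$ cover $S$ up to a $\mu$- (hence $\nu_0$- and $\nu_1$-) null set gives $\nu_0(B)=\nu_1(B)$, so $\nu_0=\nu_1$ and $\mu\in\ex\La_{\f,c}$.

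The proof is essentially routine once the three ingredients are lined up; I do not see a serious obstacle. The only place requiring a little care is the justification that $\mu$-a.e.\ statements on each atom are automatically $\nu_0$- and $\nu_1$-a.e.\ statements, which is what lets Lemma~\ref{lem:const} be applied through $\mu$ alone rather than separately for each of $\nu_0,\nu_1$; the rest is linear algebra and a case split on the atom.
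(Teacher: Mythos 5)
Your proof is correct and follows essentially the same route as the paper's: fix the decomposition, note that both summands are absolutely continuous with respect to $\mu$, localize to the atoms $A_i$, and use linear independence to pin down the masses. The only real difference is that where you invoke Lemma~\ref{lem:const} to make each $f_j$ constant on $A_i$ and then need a final case split to upgrade $\nu_0(A_i)=\mu(A_i)$ and $\nu_1(A_i)=\mu(A_i)$ to $(\nu_0)_{A_i}=(\nu_1)_{A_i}$, the paper observes once (its Step~2) that on a $\mu$-atom any measure absolutely continuous with respect to $\mu$ restricts to a nonnegative scalar multiple of $\mu_{A_i}$, which yields the equality of the restricted measures immediately and makes your final paragraph unnecessary.
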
 

In the case when $k=1$, $f_1=1$ on 
$S$, and $c=1$, Theorem~\ref{th:suff} turns into 

\begin{corollary}\label{cor:suff,k=1} 
Any $0,\!1$ measure in $\Pi$ is in 
$\ex\Pi$ \emph{(cf.\ Corollary~\ref{cor:necess,k=1})}. 
\end{corollary}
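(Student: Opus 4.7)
The plan is to obtain the corollary as a direct specialization of Theorem~\ref{th:suff}. Set $k=1$, $f_1\equiv 1$ on $S$, and $c=1\in\R^1$. For any $\mu\in\msi$ with $\f\in L^1(\mu)$, the condition $\int_S \f\dd\mu=c$ amounts to $\mu(S)=1$, so $\La_{\f,c}=\Pi$ and hence $\ex\La_{\f,c}=\ex\Pi$. It therefore suffices to exhibit, for an arbitrary $0,\!1$ measure $\mu\in\Pi$, an atomic non-null $(m,\mu)$-partition of $S$ with $m\in\intr11$ such that the corresponding integrals of $\f$ are linearly independent.

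I would take $m=1$ and the trivial partition $(A_1):=(S)$. Since $\mu(S)=1>0$, this is a non-null $(1,\mu)$-partition of $S$. Because $\mu$ is a $0,\!1$ measure, one has $\mu_S(B)=\mu(B)\in\{0,1\}=\{0,\mu(S)\}$ for every $B\in\Si$, so $S$ is a $\mu$-atom, and the partition is atomic. Finally, $\int_{A_1}\f\dd\mu=\mu(S)=1\ne 0$, so this single vector in $\R^1$ is (trivially) linearly independent. Theorem~\ref{th:suff} then yields $\mu\in\ex\La_{\f,c}=\ex\Pi$, as desired.

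I do not anticipate any real obstacle here: the corollary is essentially a direct reading of Theorem~\ref{th:suff} in the simplest one-dimensional probability-measure setting, and the only substantive observation is that the $0,\!1$ hypothesis on $\mu$ is precisely what makes $S$ itself a $\mu$-atom. As a sanity check one could also give a self-contained verification: if $\mu=(1-t)\nu_0+t\nu_1$ with $t\in(0,1)$ and $\nu_0,\nu_1\in\Pi$, then for each $A\in\Si$ with $\mu(A)=0$ nonnegativity forces $\nu_0(A)=\nu_1(A)=0$, and applying the same to $A^c$ whenever $\mu(A)=1$ gives $\nu_0(A)=\nu_1(A)=1$, so $\nu_0=\nu_1$; but invoking Theorem~\ref{th:suff} is cleaner and keeps the exposition uniform with the rest of the section.
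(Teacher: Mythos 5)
Your proposal is correct and matches the paper's route exactly: the paper obtains this corollary precisely as the specialization of Theorem~\ref{th:suff} to $k=1$, $f_1\equiv 1$, $c=1$, with the trivial atomic partition $(S)$ supplied by the $0,\!1$ hypothesis. Your identification of $\La_{\f,c}=\Pi$ and the observation that $S$ itself is a $\mu$-atom are the only points that need checking, and you handle both correctly.
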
 

\begin{proof}[Proof of Theorem~\ref{th:suff}]
Take any 
$(t,\nu_+,\nu_-)\in(0,1)\times\La_{\f,c}\times\La_{\f,c}$ such that \break 
$(1-t)\nu_++t\nu_-=\mu$. 
We need to show that then $\nu_+=\nu_-$. 

\emph{Step 1.} Let us note here that $\nu_\pm(B)=0$ for all $B\in\Si$ such that $\mu(B)=0$; that is, the measures $\nu_\pm$ are absolutely continuous with respect to $\mu$. This follows because $1-t>0$, $t>0$, and for all $B\in\Si$ one has $0\le(1-t)\nu_+(B)\le\mu(B)$ and $0\le t\nu_-(B)\le\mu(B)$. 

\emph{Step 2.} Here we note that, if $A$ is a $\mu$-atom and a measure $\nu\in\msi$ is absolutely continuous with respect to $\mu$, then $A$ is a $\nu$-atom as well and, moreover, $\nu_A=\la\mu_A$ for some $\la\in[0,\infty)$.  
Indeed, take any $B\in\Si$ such that $B\subseteq A$. 
If $\mu(B)=0$ then $\nu(B)=0$, by the absolute continuity. Otherwise, $\mu(B)>0$ and $\mu(A\setminus B)=0$,  whence $\nu(A\setminus B)=0$; so, $\nu(B)=\nu(A)$ and $\mu(B)=\mu(A)>0$. 
Thus, $A$ is a $\nu$-atom and $\nu_A=\la\mu_A$, where $\la:=0$ if $\mu(A)=0$ and $\la:=\nu(A)/\mu(A)$ otherwise.  

\emph{Step 3.} From Steps 1 and 2 it follows that $(\nu_\pm)_{A_i}=\la_{\pm;i}\,\mu_{A_i}$ for every $i\in\intr1m$ and some $\la_{\pm;i}\in[0,\infty)$. 
Therefore and because $\nu_\pm\in\La_{\f,c}$, one has 
$0=c-c=\int_S \f\dd(\nu_+-\nu_-)
=\sum_1^m(\la_{+;i}-\la_{-;i})\int_{A_i}\f\dd\mu$. 
Therefore and because the vectors $\int_{A_1} \f\dd\mu,\dots,\int_{A_m} \f\dd\mu$ are linearly independent, $\la_{+;i}-\la_{-;i}=0$ and hence $(\nu_+)_{A_i}=(\nu_-)_{A_i}$ for all $i\in\intr1m$, which implies that indeed $\nu_+=\nu_-$.   
Now the proof of Theorem~\ref{th:suff} is complete. 
\end{proof}  

When the measurable space $(S,\Si)$ has to do with a topology, usually one can somewhat simplify the condition in Theorems~\ref{th:necess} and \ref{th:suff} of the existence of an atomic partition. 
More specifically, recall that by Corollary~\ref{cor:dirac repr} under condition \eqref{eq:cond} any measure $\mu\in\ex\La_{\f,C}$ can be represented  by a discrete measure $\tmu\in\Dekf$ in the sense of \eqref{eq:f repr}. That was enough for applications presented in Corollary~\ref{cor:dirac opt}. 
Yet, it may be of interest to know under what conditions any measure in the set $\ex\La_{\f,C}$ is (not just represented by but) equal to a discrete measure in $\Dekf$. 

To address this matter, suppose from now on to the rest of the paper that the set $S$ is endowed with a Hausdorff topology, and let then the $\si$-algebra $\Si$ contain 
the corresponding Borel $\si$-algebra $\B$. 


\begin{proposition}\label{prop:dirac} 
Take any $\mu\in\msi$ and $A\in\Si$. 
Then the following statements hold. 
\begin{enumerate}[(i)] 
\item
If $A$ is a $\mu$-atom then $\card\supp\mu_A\le1$. 
\item If $\card\supp\mu_A\le1$, $A\ne\emptyset$, and the 
measure $\mu_A$ is support-concentrated \big(that is, $\mu_A(\supp\mu_A)=\mu(A)$\big), 
then $\mu_A=\la\de_s$ for some $s\in A$ and $\la\in[0,\infty)$. 
\item For the 
measure $\mu_A$ to be support-concentrated, 
it is enough that the measure $\mu$ be a Radon one: $\mu(E)=\sup\{\mu(K)\colon K\subseteq E, K\text{ is compact}\}$ for all $E\in\Si$. 
\item 
In turn, for every measure in $\msi$ to be a Radon one, it is enough that $\Si=\B$ and 
$S$ be a Polish space (that is, a separable completely metrizable topological space); for instance, $\R^d$ or, more generally, any separable Banach space is Polish. 
\item If $\mu_A=\la\de_s$ for some $s\in A$ and $\la\in[0,\infty)$, then $A$ is a $\mu$-atom.   
\end{enumerate}
\end{proposition}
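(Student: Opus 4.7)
The five parts are short and essentially independent, so the strategy is to dispatch them in order, leaning on the Hausdorff assumption, the definition of support, and a standard tightness/Radon argument for parts (iii)--(iv).

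For (i), I would argue by contradiction. Suppose distinct $s_1,s_2\in\supp\mu_A$. By Hausdorff pick disjoint open neighborhoods $U_1,U_2$; these lie in $\B\subseteq\Si$, and $\mu_A(U_i)>0$ by definition of support. The $\mu$-atom property of $A$ forces $\mu_A(U_i)=\mu(A)$ for $i=1,2$, so $2\mu(A)=\mu_A(U_1)+\mu_A(U_2)=\mu_A(U_1\cup U_2)\le\mu(A)$, giving $\mu(A)=0$ and contradicting $\mu_A(U_i)>0$. Hence $\card\supp\mu_A\le1$.

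For (ii), I would split on whether $\supp\mu_A$ is empty. If $\supp\mu_A=\emptyset$, support-concentration gives $\mu(A)=\mu_A(\supp\mu_A)=0$, and any $s\in A$ (which exists because $A\ne\emptyset$) yields $\mu_A=0=0\cdot\de_s$. Otherwise $\supp\mu_A=\{s\}$ for a single point $s$; singletons are closed in a Hausdorff space, hence in $\Si$, so $\mu_A(\{s\})=\mu(A)=\mu_A(S)$, which forces $\mu_A=\mu(A)\,\de_s$. It remains to check $s\in A$: if $\mu(A)>0$ and $s\notin A$, then $\mu_A(\{s\})=\mu(A\cap\{s\})=0\ne\mu(A)$, a contradiction; and the $\mu(A)=0$ case has already been handled. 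Part (v) is then immediate: $\mu_A(B)=\la\,\ii{s\in B}\in\{0,\la\}=\{0,\mu_A(S)\}=\{0,\mu(A)\}$, so $A$ is a $\mu$-atom.

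For (iii), the key observation is that $S\setminus\supp\mu_A$ is open and, by definition of support, equal to a union of open sets each of $\mu_A$-measure zero. Any compact $K\subseteq S\setminus\supp\mu_A$ is covered by finitely many such sets and therefore satisfies $\mu_A(K)=0$. If $\mu$ is Radon, the restriction $\mu_A$ inherits inner regularity by compacts (every compact subset of $A\cap E$ is a compact subset of $E$ with the same $\mu$- and $\mu_A$-mass), so $\mu_A(S\setminus\supp\mu_A)=\sup\{\mu_A(K)\colon K\subseteq S\setminus\supp\mu_A\text{ compact}\}=0$, which is support-concentration. For (iv), I would simply invoke Ulam's theorem: every finite Borel measure on a Polish space is tight and hence Radon.

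The only mildly technical step is (iii), where one must notice that inner regularity of $\mu$ transfers to $\mu_A$; everything else is a direct unfolding of the definitions of atom, support, and Dirac measure.
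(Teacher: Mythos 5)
Your proof is correct and follows essentially the same route as the paper's: contradiction via disjoint neighborhoods for (i), the case split on empty versus singleton support for (ii), the finite-subcover argument plus the observation that $\mu_A$ inherits the Radon property for (iii), a standard citation (Ulam/tightness) for (iv), and a direct computation for (v). The minor variations (additivity of the two neighborhoods rather than complementation in (i); transferring inner regularity via compact subsets of $A\cap E$ in (iii)) are cosmetic and equally valid.
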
 
Here, as usual, $\card$ stands for the cardinality and $\supp$ for the support. Recall that the support, $\supp\mu$, of a measure $\mu\in\msi$ is the set of all points $s\in S$ such that $\mu(G)>0$ for all open subsets $G$ of $S$ such that $G\ni s$; equivalently, $\supp\mu$ is the intersection of all closed subsets of $S$ of ``full'' measure $\mu(S)$.  

An immediate corollary of parts (i)--(iii) of Proposition~\ref{prop:dirac} is 
the well-known fact that every $0,\!1$-valued regular Borel measure on a Hausdorff space is a Dirac measure; 
see e.g.\ \cite[Corollary~2.4]{adamski76}. 
As seen from \cite[Example~2.3]{okada}, 
part (iii) of Proposition~\ref{prop:dirac} would be false in general if the condition that $\mu$ be a Radon measure were relaxed to it being regular; that is, if closed sets were used in place of compact sets $K$.  
%
For a further study of properties of support sets, see e.g.\ \cite{seidel}. 

\begin{proof}[Proof of Proposition~\ref{prop:dirac}] For brevity, let $S_A:=\supp\mu_A$. 

\noindent (i):\quad Suppose that $A$ is a $\mu$-atom, whereas $S_A$ contains two distinct points, say $s_1$ and $s_2$. Let $G_1$ and $G_2$ be open sets in $S$ such that $s_1\in G_1$, $s_2\in G_2$, and $G_1\cap G_2=\emptyset$. 
Then $\mu_A(G_1)>0$ and $\mu(A)-\mu_A(G_1)=\mu_A(S\setminus G_1)\ge\mu_A(G_2)>0$, so that $\mu_A(G_1)\in\big(0,\mu(A)\big)$, which contradicts the condition that $A$ is a $\mu$-atom. 
Thus, part (i) of Proposition~\ref{prop:dirac} is proved. 

\noindent (ii):\quad Suppose that indeed $\card S_A\le1$, $A\ne\emptyset$, and $\mu_A(S_A)=\mu(A)$. 
If at that $\mu(A)=0$ then $\mu_A=\la\de_s$ for $\la=0$ and any $s\in A$. 
Suppose now that $\mu(A)>0$. 
Then $0<\mu(A)=\mu_A(S_A)$, whence $S_A\ne\emptyset$, $\card S_A=1$, $S_A=\{s\}$ for some $s\in S$, $\mu_A(\{s\})=\mu_A(S_A)=\mu(A)>0$, and hence $s\in A$. 
Also, $\mu_A(S\setminus\{s\})=\mu_A(S)-\mu_A(\{s\})=\mu_A(S)-\mu_A(S_A)=0$, whence $\mu_A(B)=0$ for all $B\in\Si$ such that $s\notin B$. 
On the other hand, for all $B\in\Si$ such that $s\in B$ one has $\mu(A)\ge\mu_A(B)\ge\mu_A(\{s\})=\mu_A(S_A)=\mu(A)$, whence $\mu_A(B)=\mu(A)$. 
Thus, for any $B\in\Si$ one has $\mu_A(B)=0$ if $s\notin B$ and $\mu_A(B)=\mu(A)$ if $s\in B$. 
That is, $\mu_A=\la\de_s$ for $\la:=\mu(A)$, which proves part (ii) of Proposition~\ref{prop:dirac}. 


\noindent (iii):\quad 
Note that, if $\mu$ is a Radon measure then $\mu_A$ is so too: if $B\in\Si$ and $K$ is a compact subset of $B$, then $0\le\mu_A(B)-\mu_A(K)=\mu_A(B\setminus K)\le\mu(B\setminus K)=\mu(B)-\mu(K)$. 
So, in the case when $\Si=\B$, part (iii) of Proposition~\ref{prop:dirac} follows from the well known fact that any Radon measure on $\B$ is support-concentrated; see e.g.\ \cite[page~222]{okada} \big(where the terminology ``$\mu$ has a strong support'' is used in place of ``$\mu$ is support-concentrated''\big).
For the readers' convenience, let us present here an easy proof of the latter fact, which works whenever $\Si\supseteq\B$. 
By 
the definition of the support of a measure, for any $s\in S\setminus S_A$ there is a set $G_s$ such that $G_s$ is open, $s\in G_s\subseteq S$, and $\mu_A(G_s)=0$. 
Take now any compact $K\subseteq S\setminus S_A$. Since $\bigcup_{s\in K}G_s\supseteq K$, there is some finite subset $F$ of $K$ such that $\bigcup_{s\in F}G_s\supseteq K$. 
So, $\mu_A(K)\le\sum_{s\in F}\mu_A(G_s)=0$. 
Thus, $\mu_A(K)=0$ for all compact $K\subseteq S\setminus S_A$, and so, since $\mu_A$ is a Radon measure, $\mu_A(S\setminus S_A)=0$ or, equivalently, $\mu_A(S_A)=\mu(A)$. 
This proves part (iii) of Proposition~\ref{prop:dirac}. 

\noindent (iv):\quad For part (iv) of the proposition, see e.g.\ \cite[P16, page XIII]{topsoe}.  

\noindent (v):\quad Part (v) of the proposition is trivial. 
\end{proof}

Proposition~\ref{prop:dirac} immediately yields 

\begin{corollary}\label{cor:dirac,Polish} 
Suppose that  
$S$ is a Polish space and $\Si=\B$. 
Take any $\mu\in\msi$ and $A\in\Si$. 
Then 
$A$ is a $\mu$-atom iff $\mu_A=\la\de_s$ for some $s\in A$ and $\la\in[0,\infty)$. 
\end{corollary}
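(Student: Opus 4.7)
The plan is to obtain Corollary~\ref{cor:dirac,Polish} as a direct assembly of the five parts of Proposition~\ref{prop:dirac}, with nothing genuinely new to prove. The hypotheses that $S$ is Polish and $\Si=\B$ are used only to activate part~(iv), which then lets the other parts chain together.

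For the ``if'' direction, I would simply invoke part~(v) of Proposition~\ref{prop:dirac}: if $\mu_A=\la\de_s$ for some $s\in A$ and $\la\in[0,\infty)$, then $A$ is a $\mu$-atom. No topology is needed here.

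For the ``only if'' direction, suppose $A$ is a $\mu$-atom. First apply part~(iv), which uses the hypotheses that $S$ is Polish and $\Si=\B$, to conclude that every measure in $\msi$ is a Radon measure; in particular $\mu$ is Radon. Next apply part~(iii) to deduce that the truncated measure $\mu_A$ is support-concentrated, i.e. $\mu_A(\supp\mu_A)=\mu(A)$. Then use part~(i) to conclude $\card\supp\mu_A\le 1$. Finally, if $A\ne\emptyset$, apply part~(ii) to obtain $\mu_A=\la\de_s$ for some $s\in A$ and $\la\in[0,\infty)$; and if $A=\emptyset$ the statement is either vacuous or handled by taking $\la=0$ (choosing any point in $S$), as in the corresponding remark at the beginning of the proof of part~(ii).

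There is no real obstacle; the only mildly delicate point is the trivial degenerate case $A=\emptyset$, which is disposed of exactly as in the proof of part~(ii) of Proposition~\ref{prop:dirac}. The proof amounts to a single sentence citing parts (iv)$\to$(iii)$\to$(i)$\to$(ii) in order for one direction and citing part (v) for the other.
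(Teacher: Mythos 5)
Your proof is correct and is exactly the argument the paper intends: the paper derives this corollary as an immediate consequence of Proposition~\ref{prop:dirac}, chaining parts (iv), (iii), (i), (ii) for the ``only if'' direction and citing part (v) for the ``if'' direction, just as you do. Your remark on the degenerate case $A=\emptyset$ is a reasonable (and slightly more careful) reading of the edge case that the paper glosses over.
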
 

Now Theorems~\ref{th:necess} and \ref{th:suff} immediately imply 

\begin{corollary}\label{cor:dirac-ext} 
Suppose that  
$S$ is a Polish space, $\Si=\B$, and $\La=\msi$. 
Then $\ex\La_{\f,c}=\Dekf\cap\La_{\f,c}$ for all $c\in\R^k$. 
\end{corollary}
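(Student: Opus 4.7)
The plan is a direct two-inclusion argument treating the earlier results as black boxes. I first observe that $\La=\msi$ trivially satisfies condition~\eqref{eq:cond}, since $\msi$ is a convex cone closed under truncation, so both Theorem~\ref{th:necess} and its consequences are available.

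For $\ex\La_{\f,c}\subseteq\Dekf\cap\La_{\f,c}$, I would take $\mu\in\ex\La_{\f,c}$. If $\mu=0$ the conclusion is immediate via the $m=0$ clause in the definition of $\Dekf$. Otherwise Theorem~\ref{th:necess} supplies some $m\in\intr1k$ and an atomic non-null $(m,\mu)$-partition $(A_1,\dots,A_m)$ of $S$ with $\int_{A_1}\f\,\dd\mu,\dots,\int_{A_m}\f\,\dd\mu$ linearly independent. Since $S$ is Polish and $\Si=\B$, Corollary~\ref{cor:dirac,Polish} applies to each $A_i$ and yields a point $s_i\in A_i$ together with $\la_i\in[0,\infty)$ such that $\mu_{A_i}=\la_i\de_{s_i}$; evaluating at $A_i$ forces $\la_i=\mu(A_i)>0$. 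Summing over the partition gives $\mu=\sum_{i=1}^m\la_i\de_{s_i}$, and since $\int_{A_i}\f\,\dd\mu=\la_i\f(s_i)$ with $\la_i>0$, the linear independence passes to $\f(s_1),\dots,\f(s_m)$, so $\mu\in\Dekf$.

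For the reverse inclusion, I would take $\mu=\sum_{i=1}^m\la_i\de_{s_i}\in\Dekf\cap\La_{\f,c}$. When $m\ge1$ the points $s_i$ are pairwise distinct, for otherwise the $\f(s_i)$ would be linearly dependent; since $S$ is Hausdorff the singletons $\{s_i\}$ are Borel, so I can manufacture an atomic non-null $(m,\mu)$-partition by $A_i:=\{s_i\}$ for $i<m$ and $A_m:=S\setminus\{s_1,\dots,s_{m-1}\}$. A direct check shows each $A_i$ is a $\mu$-atom with $\mu(A_i)=\la_i>0$ and $\int_{A_i}\f\,\dd\mu=\la_i\f(s_i)$, so linear independence of the integrals is inherited from that of $\f(s_1),\dots,\f(s_m)$. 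Theorem~\ref{th:suff} then gives $\mu\in\ex\La_{\f,c}$. The case $m=0$ reduces to the trivial observation that the zero measure is extreme in any subset of $\msi$ containing it, since nonnegative measures whose convex combination vanishes must both vanish.

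Virtually all substantive work is already done by the earlier theorems: the Polish/Borel hypothesis feeds the forward direction through Corollary~\ref{cor:dirac,Polish}, and the Hausdorff property alone suffices for the reverse direction. I anticipate no genuine obstacle, only modest bookkeeping, the only delicate point being the small-$m$ boundary—in particular $m=1$, where the partition degenerates to $A_1=S$, which is nonetheless a $\mu$-atom because $\mu=\la_1\de_{s_1}$ takes only the values $0$ and $\la_1$.
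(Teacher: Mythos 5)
Your proof is correct and follows exactly the route the paper intends: the paper states that Corollary~\ref{cor:dirac-ext} is an immediate consequence of Theorems~\ref{th:necess} and \ref{th:suff} (via Corollary~\ref{cor:dirac,Polish}), and your two-inclusion argument simply fills in those details, including the correct handling of the $m=0$ and $m=1$ boundary cases.
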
 

This latter result should be enough for most applications. Yet, one may want to compare it with equality \eqref{eq:dirac opt eq} in Corollary~\ref{cor:dirac opt}, which holds without any topological assumptions.

In conclusion, let us briefly discuss 
existing literature. 
The present paper was mainly motivated by the work of Winkler \cite{winkler88}, especially by  
the principal result there:  

\begin{theorem}\emph{\bf (\cite[Theorem~2.1]{winkler88}).} \label{th:winkler}
Suppose that 
the set $\Pi$ of all probability measures in $\La$ is a Choquet-simplex and 
$\ex\Pi\subseteq\De^{(1)}$. 
Then the following conclusions hold. 
\begin{enumerate}[(a)]
	\item 
If $C=
(-\infty,c_1]\times\dots\times(-\infty,c_k]$ for some $(c_1,\dots,c_k)\in\R^k$, then \break 
$\ex(\Pi\cap\La_{\f,C})\subseteq\De_{(1,\f)}^{(1+k)}$, where $(1,\f):=(1,f_1,\dots,f_k)$.   
\item 
For any $c\in
\R^k$, one has $\ex(\Pi\cap\La_{\f,c})=\Pi\cap\La_{\f,c}\cap\De_{(1,\f)}^{(1+k)}$.  
\end{enumerate}
\end{theorem}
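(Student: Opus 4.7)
The plan is to derive Winkler's theorem from the main technical results of the paper, namely Theorems~\ref{th:necess} and \ref{th:suff}, using the hypothesis $\ex\Pi\subseteq\De^{(1)}$ as a non-topological substitute for the Radon/Polish conditions exploited in Corollary~\ref{cor:dirac-ext}. Throughout, I would identify $\Pi\cap\La_{\f,c}$ with $\La_{\tilde\f,\tilde c}$, where $\tilde\f:=(1,\f)$ and $\tilde c:=(1,c_1,\dots,c_k)\in\R^{k+1}$, so that there are $k+1$ moment constraints and the bound $m\le 1+k$ on the number of atoms appears naturally in the conclusion.

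First I would reduce part (a) to part (b). Given $\mu\in\ex(\Pi\cap\La_{\f,C})$ with $C=(-\infty,c_1]\times\dots\times(-\infty,c_k]$, set $\tilde c:=\int_S\f\,\dd\mu\in C$. Any convex decomposition of $\mu$ inside $\Pi\cap\La_{\f,\tilde c}$ is also a decomposition inside $\Pi\cap\La_{\f,C}$, so extremality transfers and $\mu\in\ex(\Pi\cap\La_{\f,\tilde c})$; part (b) then yields $\mu\in\De^{(1+k)}_{(1,\f)}$.

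For the inclusion $\subseteq$ of part (b), take $\mu\in\ex\La_{\tilde\f,\tilde c}$. Theorem~\ref{th:necess} supplies $m\in\intr{1}{k+1}$ and an atomic non-null $(m,\mu)$-partition $(A_1,\dots,A_m)$ of $S$ with $\int_{A_i}\tilde\f\,\dd\mu$ linearly independent. Let $\pi_i:=\mu_{A_i}/\mu(A_i)\in\Pi$. The key step is to show $\pi_i\in\ex\Pi$: if $\pi_i=(1-s)\rho_0+s\rho_1$ with $\rho_0,\rho_1\in\Pi$ and $s\in(0,1)$, absolute continuity $\rho_r\ll\pi_i$ forces each $\rho_r$ to be concentrated on $A_i$, and by Lemma~\ref{lem:const} each $f_j$ is $\mu_{A_i}$-a.e. (hence $\rho_r$-a.e.) constant on $A_i$, so $\rho_0$ and $\rho_1$ share the same $\f$-integrals. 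Then $\mu_r:=(\mu-\mu_{A_i})+\mu(A_i)\rho_r$ lies in $\Pi\cap\La_{\f,c}$ by condition~\eqref{eq:cond} and satisfies $(1-s)\mu_0+s\mu_1=\mu$; extremality of $\mu$ gives $\mu_0=\mu_1$, whence $\rho_0=\rho_1$. The hypothesis $\ex\Pi\subseteq\De^{(1)}$ now yields $\pi_i=\de_{s_i}$, so $\mu_{A_i}=\mu(A_i)\de_{s_i}$ and $\mu=\sum_i\mu(A_i)\de_{s_i}$; linear independence of the vectors $\mu(A_i)(1,\f(s_i))=\int_{A_i}\tilde\f\,\dd\mu$ translates to linear independence of the $(1,\f(s_i))$, so $\mu\in\De^{(1+k)}_{(1,\f)}$.

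For the reverse inclusion $\supseteq$ of part (b), take $\mu=\sum_{i=1}^m\la_i\de_{s_i}$ with $\la_i>0$ and $(1,\f(s_i))$ linearly independent; the route is to construct an atomic $(m,\mu)$-partition $(A_1,\dots,A_m)$ with $s_i\in A_i$, making each $A_i$ a $\mu$-atom carrying $\mu_{A_i}=\la_i\de_{s_i}$ and $\int_{A_i}\tilde\f\,\dd\mu=\la_i(1,\f(s_i))$, and then invoke Theorem~\ref{th:suff}. The main obstacle is measurable separation of the points $s_i$ in an abstract $\Si$ in which singletons need not lie; this is precisely where the Choquet-simplex hypothesis is expected to do work in Winkler's argument, either by furnishing a sufficiently rich $\Si$-measurable separation (as in the constructions behind \cite{weiz-wink}) or by exploiting uniqueness of barycentric decompositions to rule out nontrivial convex splittings of $\mu$ directly, bypassing the need to build the atomic partition explicitly. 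Apart from this separation subtlety, both directions are elementary consequences of Theorems~\ref{th:necess} and~\ref{th:suff} together with Lemma~\ref{lem:const}.
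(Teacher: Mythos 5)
First, a point of orientation: the paper does not prove Theorem~\ref{th:winkler} at all --- it is quoted verbatim from \cite{winkler88} for purposes of comparison, and the author explicitly remarks that the methods of the present paper are ``different from'' those of \cite{winkler88}. So your proposal must stand on its own, and it has a genuine gap. The decisive problem is in the forward inclusion of part (b): you invoke Theorem~\ref{th:necess} to produce the atomic partition, and later you place $\mu_r:=(\mu-\mu_{A_i})+\mu(A_i)\rho_r$ back into $\La$ ``by condition~\eqref{eq:cond}''. But \eqref{eq:cond} is not among Winkler's hypotheses, and the paper explicitly exhibits a $\Pi$ that is a Choquet-simplex with $\ex\Pi\subseteq\De^{(1)}$ for which \eqref{eq:cond} fails for the cone $\La=[0,\infty)\Pi$ (mixtures of discrete laws and absolutely continuous laws with everywhere continuous densities: truncation destroys continuity of the density). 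In that setting neither Theorem~\ref{th:necess} nor the membership $(\mu-\mu_{A_i})+\mu(A_i)\rho_r\in\La$ is available, so your argument establishes a different theorem --- one in which the Choquet-simplex hypothesis is replaced by the truncation-stability condition \eqref{eq:cond} --- rather than Winkler's. The simplex hypothesis must do the work exactly where you appeal to truncation; Winkler's actual route goes through integral representation of elements of $\Pi$ over $\ex\Pi$, which is precisely the machinery your reduction tries to bypass.

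By contrast, the reverse inclusion of part (b) is essentially sound and needs neither \eqref{eq:cond} nor the simplex hypothesis: Theorem~\ref{th:suff} applies directly, and the ``measurable separation'' difficulty you flag is not a real obstacle, since linear independence of the vectors $(1,\f(s_1)),\dots,(1,\f(s_m))$ forces the values $\f(s_1),\dots,\f(s_m)$ to be pairwise distinct, so the sets $A_i:=\f^{-1}(U_i)$, for a Borel partition $(U_1,\dots,U_m)$ of $\R^k$ separating these values, form an atomic non-null $(m,\mu)$-partition with $\mu_{A_i}=\la_i\de_{s_i}$. Your reduction of part (a) to part (b) (extremality passes to a subset containing the point) is also correct. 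The missing --- and unfortunately central --- piece is therefore the forward inclusion under Winkler's actual hypotheses.
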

By a remark in \cite[Section~9]{phelps}, the meaning of the condition that $\Pi$ is a Choquet-simplex 
can be expressed 
as follows: $\Pi$ is a convex set of probability measures such that for the cone $\C:=[0,\infty)\,\Pi$ generated by $\Pi$ and any $\mu_1$ and $\mu_2$ in $\C$ there is some $\nu\in\C$ such that $\mu_1-\nu$ and $\mu_2-\nu$ are in $\C$ and for any $\tnu\in\C$ such that $\mu_1-\tnu$ and $\mu_2-\tnu$ are in $\C$ one has $\nu-\tnu\in\C$. 


To an extent, our Theorems~\ref{th:necess} and \ref{th:suff} (cf.\ also Proposition~\ref{prop:dirac} and Corollaries~\ref{cor:dirac,Polish} and \ref{cor:dirac-ext}) correspond to  
parts (a) and (b), respectively, of Theorem~\ref{th:winkler}. 
One may note that the second condition, $\ex\Pi\subseteq\De^{(1)}$, in Theorem~\ref{th:winkler} is of the same form (corresponding to $k=0$ affine restrictions on the measure in addition to the requirement that it be a probability measure) as the conclusion $\ex(\Pi\cap\La_{\f,C})\subseteq\De_{(1,\f)}^{(1+k)}$ in part (a) of the theorem, where $k$ additional affine restrictions on the measure are present. That is in distinction with Theorems~\ref{th:necess} and \ref{th:suff} of this paper (cf.\ also Corollaries~\ref{cor:necess,k=1} and \ref{cor:suff,k=1}). 
Moreover, the set $C$ in Theorem~\ref{th:necess} may be any subset of $\R^k$ and not necessarily of the orthant form assumed in part (a) of Theorem~\ref{th:winkler}. 

Also in distinction with Theorem~\ref{th:winkler}, the measures in this paper are not required to be probability ones; for instance, 
the set $\La$ may coincide with the set $\msi$ of all measures on $\Si$. 

Condition~\eqref{eq:cond} in Theorem~\ref{th:necess} appears to be easier to check than the conditions in Theorem~\ref{th:winkler} that $\Pi$ be a Choquet-simplex and $\ex\Pi\subseteq\De^{(1)}$. In particular, \eqref{eq:cond} is trivially satisfied in the just mentioned case when $\La=\msi$, 
which appears to be of main interest in applications; condition
\eqref{eq:cond} holds as well in the examples (a), (b), (c) in \cite[page~585]{winkler88} --  if one replaces there the sets $P$ of probability measures by the corresponding cones $[0,\infty)\,P$. 
At this point one may also recall that condition \eqref{eq:cond} (or, in fact, any other special condition) is not needed or used to establish Theorem~\ref{th:suff}; however, the latter theorem appears not nearly as useful as Theorem~\ref{th:necess} in such applications as Corollary~\ref{cor:dirac opt}.  

One might also want to compare condition \eqref{eq:dirac in} that 
$\La$ contain the set $\Dekf$ (or $\Dek$) of discrete measures with the condition in Theorem~\ref{th:winkler} that the set $\ex\Pi$ be contained in 
$\De^{(1)}$, 
even though these two conditions go in opposite directions.  
It appears that \eqref{eq:dirac in} is generally easier to satisfy and check. 
Note also that condition \eqref{eq:dirac in} is used in this paper only to obtain the equality \eqref{eq:dirac opt eq}. 

One can construct an example when $\Pi$ is a Choquet-simplex 
with $\ex\Pi\subseteq\De^{(1)}$ 
while condition~\eqref{eq:cond} fails to hold for the corresponding cone $\La=[0,\infty)\Pi$. For instance, one may let $\Pi$ be the set of all mixtures of the discrete probability distributions on $\R$ and the absolutely continuous probability distributions on $\R$ with everywhere continuous densities. 

It is also easy to give an example when conditions~\eqref{eq:cond} and \eqref{eq:dirac in} both hold while $\ex\Pi\not\subseteq\De^{(1)}$. For instance, suppose that $S$ is any uncountable set, $\Si$ is the $\si$-algebra over $S$ generated by all countable subsets of $S$, and $\La=\msi$, so that $\Pi$ is the set of all probability measures on $\Si$. Then \eqref{eq:cond} and \eqref{eq:dirac in} are both trivially satisfied. On the other hand, consider the $0,\!1$ measure (say $\pi$) on $\Si$ that takes the value $0$ precisely on all countable sets in $\Si$. Then $\pi\in\ex\Pi$, by Corollary~\ref{cor:suff,k=1}. Yet, $S\setminus\{s\}\in\Si$ and $\pi(S\setminus\{s\})=1$ for any $s\in S$ and hence $\pi\notin\De^{(1)}$. 

However, these two examples may seem rather artificial. It appears that the results given here will be about as effective as those in \cite{winkler88} in most applications. 

The methods presented in 
this paper seem 
different from and more elementary than those of \cite{winkler88}. 
In particular, 
the present paper is self-contained, except for quoting \cite{winkler88} 
and \cite{topsoe} concerning part (v) of Proposition~\ref{prop:=ex} and part 
(iv) of Proposition~\ref{prop:dirac}, respectively.

As pointed out in \cite{winkler88}, the results there generalize ones in Richter \cite{richter57} (for $S\subseteq\R$ and piecewise-continuous $f_j$'s), Mulholland and Rogers \cite{mulhol-rogers} (for $S=\R$), and Karr \cite{karr} (for compact metric spaces $S$). 

An equality similar to \eqref{eq:dirac opt eq} was given by Hoeffding~\cite{hoeff-extr} for $S=\R$; in fact, the result there holds for product measures on $\R^n$. 
When $S$ is an interval in $\R$ and the functions $f_1,\dots,f_k,g$ form a Tchebycheff system, such results can be considerably improved: in that case, the support of extremal measures consists of only about $k/2$, rather than $k$, points; see e.g.\ \cite{karlin-studden,krein-nudelman,tcheb}.




%


\bibliographystyle{abbrv}


\bibliography{C:/Users/Iosif/Dropbox/mtu/bib_files/citations}

\end{document}